\title{Omnimosaics}
\author {Katie R.~Banks\\
Department of Applied Mathematics\\
Harvard University\and
Anant P.~Godbole\\
Department of Mathematics and Statistics\\
East Tennessee State University\and
Nicholas George Triantafillou\\
Department of Mathematics\\
University of Michigan, Ann Arbor}
\begin{document}
\def\qed{\vbox{\hrule\hbox{\vrule\kern3pt\vbox{\kern6pt}\kern3pt\vrule}\hrule}}
\def\ms{\medskip}
\def\n{\noindent}
\def\ep{\varepsilon}
\def\G{\Gamma}
\def\tne{\lr{{t}\over{ne}}\rr^{2/t}}
\def\Gt{\Gamma(k+1)}
\def\Gn{\Gamma(n+1)}
\def\Gr{\Gamma(r+1)}
\def\Gc{\Gamma(c+1)}
\def\Gtr{\Gamma(k-r+1)}
\def\Gtc{\Gamma(k-c+1)}
\def\Gntr{\Gamma(n-k+r+1)}
\def\Gntc{\Gamma(n-k+c+1)}
\def\lr{\left(}
\def\lf{\lfloor}
\def\rf{\rfloor}
\def\lc{\left\{}
\def\rc{\right\}}
\def\rr{\right)}
\def\ph{\varphi}
\def\p{\mathbb P}
\def\v{\mathbb V}
\def\nk{n \choose k}
\def\a{\cal A}
\def\e{\mathbb E}
\def\l{\mathbb L}
\def\lg{{\rm lg}}
\newcommand{\bsno}{\bigskip\noindent}
\newcommand{\msno}{\medskip\noindent}
\newcommand{\oM}{M}
\newcommand{\omni}{\omega(k,a)}
\newtheorem{thm}{Theorem}[section]
\newtheorem{con}{Conjecture}[section]
\newtheorem{deff}[thm]{Definition}
\newtheorem{lem}[thm]{Lemma}
\newtheorem{cor}[thm]{Corollary}
\newtheorem{rem}[thm]{Remark}
\newtheorem{prp}[thm]{Proposition}
\newtheorem{ex}[thm]{Example}
\newtheorem{eq}[thm]{equation}
\newtheorem{que}{Problem}[section]
\newtheorem{ques}[thm]{Question}
\providecommand{\floor}[1]{\left\lfloor#1\right\rfloor}
\maketitle
\begin{abstract}  An {\it omnimosaic} $O(n,k,a)$ is defined to be an $n\times n$ matrix, with entries from the set ${\cal A}=\{1,2,\ldots,a\}$, that contains, as a submatrix, each of the $a^{k^2}$ $k\times k$ matrices over ${\cal A}$.  We provide constructions of omnimosaics and show that for fixed $a$ the smallest possible size $\omega(k,a)$ of an $O(n,k,a)$ omnimosaic satisfies
\[\frac{ka^{k/2}}{e}\le \omega(k,a)\le \frac{ka^{k/2}}{e}(1+o(1))\] for a well-specified function $o(1)$ that tends to zero as $k\to\infty$.
\end{abstract}
\section{Introduction}  We start with a discussion of the Bible Code.  The paper \cite{witzum} of Witztum, Rips and Rosenberg made the extraordinary claim that the Hebrew text of the Book of Genesis encoded events which did not occur until millenia after the text was written. Of particular note was that the predictions occurred in equidistant letter schemes.  In the rejoinder paper of McKay et al \cite{mckay} the authors exhibited that the claims of Witzum et al. were ``fatally defective," and  ``indeed that their result merely reflects on the choices made in designing their experiment and collecting the data for it."  More recently, Abraham et al. \cite{omni} defined {\it $k$-omnibus sequences} (or $k$-omni sequences, for short) as being $n$-long sequences over the alphabet ${\cal A}=\{1,2,\ldots,a\}$ that contained {\it each} sequence of length $k$ over $\a$ as a (not necessarily equidistant or contiguous) subsequence.  A computer check showed, for example, that an English translation of Tolstoy's {\it War and Peace} was 950-omni but not 951-omni.  An easy connection was made with the coupon collection problem, see e.g., \cite{feller}, \cite{zeil}:  A sequence of $n$ letters is $k$-omni if and only if it contains at least $k$ disjoint ``coupon collections".  This led to a threshold result for a random sequence being $k$-omni:
\begin{thm}
Let $r>0$ be a constant, and fix $a\geq 2$, $n=rk$, where $n,k$ are both integers.  Set $H(1..a)=1+\frac{1}{2}+\ldots\frac{1}{a}$.  Then
$$\lim_{k \rightarrow\infty} {\mathbb P}({\rm Sequence\ is\ {\it k}-omni}) = \left\{
\begin{array}{ll}
0, & \text{ if $r<a H(1..a)$, or}\\
1, & \text{ if $r>a H(1..a)$}
\end{array}\right.$$
\end{thm}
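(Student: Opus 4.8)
The plan is to convert the combinatorial statement into a statement about a sum of independent and identically distributed coupon-collection times, and then apply the weak law of large numbers. By the equivalence recorded above, a random sequence $X_1,\dots,X_n$ (with the $X_i$ drawn independently and uniformly from $\a$) is $k$-omni if and only if it contains at least $k$ disjoint coupon collections. Reading the sequence from left to right, let $T_1$ be the first index at which all $a$ symbols have appeared, and inductively let $T_i$ be the first index after $T_{i-1}$ at which a fresh coupon collection has been completed. If $N_n=\max\{i:T_i\le n\}$ denotes the number of disjoint collections contained in the first $n$ letters, then the sequence is $k$-omni precisely when $N_n\ge k$, i.e. when $T_k\le n=rk$. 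Thus
\[\p(\text{Sequence is $k$-omni}) = \p\!\left(T_k\le rk\right).\]

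First I would establish that $T_k=\sum_{i=1}^k Y_i$, where the increments $Y_i=T_i-T_{i-1}$ (with $T_0=0$) are independent and identically distributed, each having the law of the classical coupon-collector waiting time $T$ for $a$ equally likely coupons. This is the key structural point: each $T_{i-1}$ is a stopping time for the i.i.d. letter stream, so the letters following position $T_{i-1}$ form a fresh i.i.d. stream independent of the past, and the greedy collection procedure restarts from scratch. The standard coupon-collector computation then gives
\[\e[Y_i]=\e[T]=a\left(1+\tfrac12+\cdots+\tfrac1a\right)=aH(1..a)=:\mu,\]
and the increment $Y_i$, being a sum of $a$ independent geometric waiting times, has finite variance (indeed exponentially decaying tails), so $\mathrm{Var}(T_k)=k\,\mathrm{Var}(Y_1)=O(k)$.

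Next I would apply the weak law of large numbers (or Chebyshev's inequality, using the variance bound) to conclude that $T_k/k\to\mu$ in probability as $k\to\infty$. Consequently, if $r>\mu=aH(1..a)$ then $\p(T_k\le rk)=\p(T_k/k\le r)\to 1$, since $r$ strictly exceeds the limiting mean; and if $r<\mu$ then the same event has probability tending to $0$. Substituting back through the equivalence yields exactly the stated dichotomy. The main point requiring care is the regeneration argument in the second step --- verifying that completing one coupon collection genuinely resets the process, so that the increments $Y_i$ are independent and carry the coupon-collector law. Once this is in place the limit theorem is routine, and no information is needed about the critical case $r=aH(1..a)$.
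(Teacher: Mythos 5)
Your argument is correct and follows exactly the route the paper itself indicates: it invokes the equivalence ``$k$-omni $\iff$ at least $k$ disjoint coupon collections,'' decomposes the waiting time $T_k$ into i.i.d.\ coupon-collector increments with mean $aH(1..a)$ via the regeneration-at-stopping-times argument, and applies the weak law of large numbers to get the threshold at $r=aH(1..a)$. The paper states this theorem as a quoted result from the earlier omnibus-sequences paper and gives no further details, so your write-up simply supplies the standard proof that that reference (and the paper's one-line sketch) has in mind.
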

Furthermore, there is a different threshold for the expected number $\e(X)$ of missing $k$-subsequences to transition from asymptotically infinite to asymptotically zero; for $a=2$ for example, the sequence goes from being non-omni w.h.p. to being omni w.h.p. at $n=3k$, but $\e(X)\to\infty\enspace(n/k\le4.403)$ and $\e(X)\to0\enspace(n/k\ge4.403)$.

After a Johns Hopkins University colloquium talk given by AG, Dan Naiman asked if these results could be generalized to higher dimensions. Now steganography ({\tt http://en.wikipedia.org/wiki/Steganography}) has been used since ancient times, and recently by groups such as Al Qaeda and the alleged Russian spies in the U.S, but how large would a random  image have to be, Naiman asked, before it contains huge numbers of images of a smaller size?  This paper is an attempt to provide some answers.  We retain the prefix adjective {\it omni} and combine it with the phrase {\it mosaic} to describe a landscape so rich that it contains all ``color pictures" of a smaller size, including gibberish, familiar photographs such as that of the 1969 moon landing, and photos of events to occur many millenia into the future:  An {\it omnimosaic} $O(n,k,a)$ is defined to be an $n\times n$ matrix, with entries from the set ${\cal A}=\{1,2,\ldots,a\}$, that contains, as a submatrix, each of the $a^{k^2}$ $k\times k$ matrices over $\a$.  When $k,a$ are fixed, the smallest $n$ for which an $O(n,k,a)$ omnimosaic exists is denoted by $\omega(k,a)$; the example
$$\begin{pmatrix}0&1&0&1\cr
1&0&1&0\cr
0&1&0&0\cr
0&1&1&1\end{pmatrix}
$$ shows that $\omega(2,2)=4$.  If $a=2$ we quickly see that an omnimosaic is a bipartite graph with $n$ elements in each color class so that each possible $k\times k$ bipartite graph occurs as an induced subgraph (with isomorphic graphs counting as separate cases).  In this respect our work continues along the lines of the vast body of work on {\it Universal Graphs} done by Moon \cite{moon};  Chung and her colleagues \cite{chung1}, \cite{chung2}; and more recent authors such as in \cite{alstrup}, \cite{butler}, and \cite{frieze}. A good historical account that includes many more references can be found in \cite{alstrup}.  The literature seems thus far to have focused on graphs that are induced universal for all graph isomorphisms; graphs that are universal or induced universal for families of graphs; and random graphs. In the nomenclature of the above authors, when $a=2$, omnimosaics would likely  be termed as ``bipartite induced universal graphs."

In Section 2 we prove that $\omega(2,3)=6$ and provide explicit omnimosaic constructions for all $k,a$, giving us an upper bound for the minimal size of an omnimosaic.  Then, in Section 3, we use Suen's correlation inequality \cite{janson} to show that
\begin{equation}\frac{ka^{k/2}}{e}\le \omega(k,a)\le \frac{ka^{k/2}}{e}(1+o(1))\end{equation} for a well-specified function $o(1)$ that tends to zero as $k\to\infty$ and which may be taken to be $2\log k/k$.  The lower bound in (1) is trivial and we dispose it off right away. There are ${{n}\choose{k}}^2$ $k\times k$ sub-matrices of an $n\times n$ matrix; these are to cover all $a^{k^2}$ possibilities so, by the pigeonhole principle we must have
\begin{equation}{{n}\choose{k}}^2\ge a^{k^2}.\end{equation}  Now, since, by a na\"{i}ve application of Stirling's formula 
\[{n\choose k}\le\lr\frac{ne}{k}\rr^k,\]
we see that we must have
\[n\ge\frac{ka^{k/2}}{e}\]
in order for the array to form an omnimosaic.

Now, had we been dealing with adjacency matrices of graphs rather than bipartite graphs ($a=2$), the above reasoning would have been replaced by
\[{n\choose k}\ge 2^{k\choose 2},\]
where $n$ is the number of vertices in the induced universal graph for graphs on $k$ vertices, which would have led to
\[
  n\ge \frac{\sqrt 2}{e}k2^{k/2}:=n_0\]
-- which is, upto a $(1+o(1))$ factor, precisely the best known lower bound on the diagonal Ramsey numbers (see, e.g., \cite{alon}).  Now, if we consider the random graph $G(n,1/2)$ and denote by $Z$ and $J$ the numbers of empty and complete graphs on $k$ vertices, the Ramsey bound tells us that
\[n\le n_0\Rightarrow \p(Z+J=0)>0.\]
The graph analog of our main result, states, on the other hand that
\[n\ge n_0(1+o(1))\Rightarrow \p({\rm all\ graphs\ are\ present})\ \to 1.\]  {\it Question.} Does this provides heuristic evidence that the {correct} asymptotic value of the diagonal Ramsey numbers is $b^k$ with $b$ closer to $\sqrt 2$ than to 4?
\section{Exact Value of $\omega(2,3)$ and a Constructive Upper Bound on Minimum Omnimosaic Size}  The pigeonhole bound of Equation (2) shows that for $k=2$ and $a=3$, we must have ${n\choose 2}\ge9$, or $n\ge5$.  We first prove that $n=5$ is impossible and then provide a general construction that will produce a $O(6,2,3)$ omnimosaic, showing that $\omega(2,3)=6$.  

Suppose that there exists a $5 \times 5$ omnimosaic with $a = 3$ and $k = 2$.  First, we will show that each row contains each letter of our alphabet.  Suppose the contrary.  Let the alphabet be $\{a,b,c\}$.  Without loss of generality, let $c$ be the missing letter in some row.  Then, all $81 - 16 = 65$ of the $2 \times 2$ matrices that contain a `$c$' must be submatrices of the $4 \times 5$ matrix that omits the given row.  However, there are only $\binom{4}{2}\binom{5}{2} = 60$ $2 \times 2$ submatrices in a $4 \times 5$ matrix, so this is impossible, yielding the desired contradiction.  Thus, each row contains each letter of our alphabet.  Notably, this means that each row either contains three of one letter and one each of the other two letters, or two of two letters and one of the remaining letter.

Now, we will focus on submatrices which contain the same letter in both positions in the bottom row.  For each letter, there are $9$ such $2\times 2$ matrices that must be included.  Now, we will count the number of submatrices of our omnimosaic that have a single letter in the bottom row.  Then, if the $i$th row contains three copies of one letter, say `$a$', and one copy each of `$b$' and `$c$', then there are $3(i-1)$ submatrices with bottom row `$aa$' from the $i$th row, and $0$ submatrices with bottom row `$bb$' and `$cc$' from the $i$th row.  Alternately, if the $i$th row contains one copy of one letter, say `$a$', and two copies each of `$b$' and `$c$', then there are $0$ submatrices with bottom row `$aa$' from the $i$th row, and $(i-1)$ submatrices with bottom row `$bb$' from the $i$th row and $(i-1)$ with bottom row `$cc$' from the $i$th row.  A review of the cases now reveals that for all $9$ of the desired $2\times 2$ matrices to appear for each letter, the second and third rows must contain three copies each of the same letter, the fourth row must contain three copies of a different letter, and the fifth row must contain three copies of the third letter.  However, this argument also holds if we enumerate the rows from the bottom rather than the top.  Then, the third and fourth rows must contain three copies of the same letter.  However, this is clearly a contradiction, so there is no $5 \times 5$ omnimosaic with $a = 3$ and $k = 2$.

We now present a construction for a {\it square} omnimosaic with side length $\left\lceil \frac{k}{2}\right\rceil a^{\left\lceil \frac{k}{2}\right\rceil} + \left\lfloor \frac{k}{2}\right\rfloor a^{\left\lfloor \frac{k}{2}\right\rfloor}$ along with a simple procedure for finding the target matrix in any construction made using this procedure. The main result of the section is as follows

\begin{thm} \label{Construction Upper Bound}
For any $a,k \in \mathbb N$, $\omni \leq \left\lceil \frac{k}{2}\right\rceil a^{\left\lceil \frac{k}{2}\right\rceil} + \left\lfloor \frac{k}{2}\right\rfloor a^{\left\lfloor \frac{k}{2}\right\rfloor}$
\end{thm}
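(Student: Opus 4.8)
The plan is to build the matrix by splitting the target $k\times k$ matrix $T$ into its top $p:=\lceil k/2\rceil$ rows and its bottom $q:=\lfloor k/2\rfloor$ rows, and correspondingly to split the rows of the ambient matrix into a top band of $pa^{p}$ rows and a bottom band of $qa^{q}$ rows, the two bands sharing all $n=pa^{p}+qa^{q}$ columns. The top half of $T$ is the sequence of its $k$ columns restricted to the first $p$ coordinates, i.e.\ a length-$k$ word $u_{1},\ldots,u_{k}$ over the product alphabet $\mathcal{A}^{p}$ (of size $a^{p}$), and the bottom half is a length-$k$ word $v_{1},\ldots,v_{k}$ over $\mathcal{A}^{q}$. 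To realize $T$ as a submatrix it then suffices to find $p$ top rows, $q$ bottom rows, and a single increasing sequence of columns $c_{1}<\cdots<c_{k}$ so that, on the chosen top rows, column $c_{j}$ reads $u_{j}$ while, on the chosen bottom rows, the same column reads $v_{j}$.

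The \emph{key building block} is the one-dimensional omni (coupon-collector) construction already implicit in the introduction: concatenating $r$ copies of a fixed enumeration of an alphabet of size $A$ produces a word of length $rA$ that contains every length-$r$ word over that alphabet as a subsequence. I would apply this with $(r,A)=(p,a^{p})$ and with $(r,A)=(q,a^{q})$, which is precisely where the two row counts $pa^{p}$ and $qa^{q}$ come from. The top band is laid out so that, by selecting $p$ of its $pa^{p}$ rows in increasing order, one can make the selected rows spell out any prescribed pattern over $\mathcal{A}^{p}$, and dually for the bottom band; the shared columns then serve as the common ``time axis'' along which $u_{\bullet}$ and $v_{\bullet}$ are read off.

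The \emph{main obstacle} is exactly that the columns are shared: the single increasing set $c_{1}<\cdots<c_{k}$ must simultaneously serve the top band (reading $u_{1},\ldots,u_{k}$) and the bottom band (reading $v_{1},\ldots,v_{k}$). A naive vertical concatenation of two independent omni tapes fails, since a fixed column would then contribute a fixed pair $(u,v)\in\mathcal{A}^{p}\times\mathcal{A}^{q}=\mathcal{A}^{k}$, forcing the wasteful bound $n\ge ka^{k}$. The resolution, and the heart of the argument, is to use the surplus rows (the factors $a^{p}$ and $a^{q}$) as the degrees of freedom that decouple the two readings: for each target one chooses the top rows and the bottom rows \emph{depending on} $T$, so that a common admissible column schedule exists. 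Concretely I would index the band rows by the product enumerations so that the top-row choice pins down a coarse block partition of the columns while the bottom-row choice refines the selection within those blocks (or vice versa), and then verify by an explicit left-to-right greedy extraction that an increasing $c_{1}<\cdots<c_{k}$ meeting both constraints can always be produced. I expect this simultaneous-alignment verification to be the only genuine difficulty; once it is established, the side length is $pa^{p}+qa^{q}=\lceil k/2\rceil a^{\lceil k/2\rceil}+\lfloor k/2\rfloor a^{\lfloor k/2\rfloor}$ by construction, which is the claimed bound.
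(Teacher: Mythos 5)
There is a genuine gap, and it is exactly the one you flag yourself: the ``simultaneous-alignment verification'' is not an expected difficulty to be checked at the end --- it is the entire content of the theorem under your decomposition, and your proposal contains no mechanism that resolves it. Note also a quantitative mismatch in the setup: the one-dimensional fact you invoke (with $(r,A)=(p,a^p)$, a word of length $pa^p$ containing every length-$p$ word over an alphabet of size $a^p$) means that selecting $p$ of the $pa^p$ top-band rows can pin down only a $p\times p$ pattern, not the full $p\times k$ top half of $T$; to contain every length-$k$ word over $\mathcal{A}^p$ as a subsequence of a single fixed column sequence you would need length at least $ka^p$, essentially all of $n$, and you would need it twice over (once per band) on the \emph{same} column set. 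So the remaining $k-p$ columns' worth of top-half information must be supplied by the column choices, and simultaneously the column choices must supply part of the bottom half --- and the ``coarse blocks refined by the other band'' device is asserted, not constructed or verified.

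The paper's proof avoids this coupling entirely by decomposing the target cell-by-cell rather than into horizontal bands: a $k\times k$ grid assigns each cell $(i,j)$ of $T$ either a horizontal line (that entry will be forced by the choice of row) or a vertical line (forced by the choice of column), balanced so that each row of the grid has about $k/2$ horizontal lines and each column about $k/2$ vertical lines. The big matrix then has one row-region of size $a^{r_i}$ per target row (enumerating all words on the horizontally-controlled positions of that row) and one column-region of size $a^{c_j}$ per target column, and one simply picks the $i$th row from the $i$th row-region and the $j$th column from the $j$th column-region. Because every cell is controlled by exactly one of the two selections, the row and column choices are completely independent and there is nothing to align; the side length $\left\lceil\frac{k}{2}\right\rceil a^{\lceil k/2\rceil}+\left\lfloor\frac{k}{2}\right\rfloor a^{\lfloor k/2\rfloor}$ falls out of the balanced grid. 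If you push your own outline to its logical end --- row selection fixes roughly half the columns of the top half, column selection must fix the rest --- you are led to precisely this quadrant/cell-wise assignment, but as written your argument stops short of the idea that makes the proof go through.
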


Note that for any $a \in \mathbb N$, and $k$ even, Theorem 2.1 yields $\omni \leq ka^{\frac{k}{2}}$, and that for $k=2,a=3$ we get $\omega(2,3)=6$.

The construction presented here will be created based on a diagram of a $k \times k$ grid with each square containing either a horizontal or vertical line.  For a given $k \times k$ grid with either a horizontal or vertical line in each square, we define $r_i$ to be the number of horizontal lines in the $i$th row, and $c_j$ to be the number of vertical lines in the $j$th column.  In general, we will refer to the row and column numbers as if our $k \times k$ grid were a $k \times k$ matrix.  The following lemma will guarantee that an appropriate diagram exists to give us the desired result.

\begin{lem}\label{grid existence}
For any $k \in \mathbb N$, there exists a way of placing either a horizontal or vertical line in each square of a $k \times k$ grid such that $\left\lfloor \frac{k}{2} \right\rfloor$ of the $r_i$ and $\left\lceil\frac{k}{2}\right\rceil$ of the $c_j$ are equal to $\left\lceil\frac{k}{2}\right\rceil$, while $\left\lceil\frac{k}{2}\right\rceil$ of the $r_i$ and $\left\lfloor \frac{k}{2} \right\rfloor$ of the $c_j$ are equal to $\left\lfloor \frac{k}{2} \right\rfloor$.
\end{lem}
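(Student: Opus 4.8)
The plan is to recast the lemma as a statement about the $\{0,1\}$-matrix $X=(x_{ij})$ defined by $x_{ij}=1$ when square $(i,j)$ holds a horizontal line and $x_{ij}=0$ when it holds a vertical line. Writing $m=\lfloor k/2\rfloor$ and $M=\lceil k/2\rceil$, the quantity $r_i$ is exactly the $i$th row sum of $X$, whereas $c_j=k-\sum_i x_{ij}$ is $k$ minus the $j$th column sum. Thus the four requirements become: $X$ should have $m$ rows of sum $M$ and $M$ rows of sum $m$, and, after the complementation built into $c_j$ (using $k-M=m$ and $k-m=M$), $X$ should have $m$ columns of sum $M$ and $M$ columns of sum $m$. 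A first sanity check I would record is that both the prescribed row-sum list and the prescribed column-sum list consist of $m$ copies of $M$ and $M$ copies of $m$, each forcing $2mM$ ones in total (consistent with $2mM+M^2+m^2=k^2$); this is what makes the target feasible.

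To build such an $X$ I would start from a circulant skeleton: set $x_{ij}=1$ precisely when $(j-i)\bmod k\in\{0,1,\dots,m-1\}$. Every row and every column of this matrix then has exactly $m$ ones. When $k$ is even we have $m=M=k/2$, so all row and column sums already equal $k/2$; reading off $r_i=k/2$ and $c_j=k-m=k/2$ shows every requirement is met (one simply designates any $k/2$ of the rows as the ``$M$'' rows, and likewise for columns), and the lemma is settled in this case.

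When $k$ is odd, say $k=2m+1$ and $M=m+1$, I would perturb the skeleton by adding a partial permutation of $m$ extra ones, each of which raises one row and one column from sum $m$ to sum $M$. Concretely, I would place ones at the positions $(i,\,i+m)$ for $i=1,\dots,m$. Each such entry is currently zero because $(i+m)-i=m\notin\{0,\dots,m-1\}$; the $m$ entries occupy the distinct rows $1,\dots,m$ and the distinct columns $m+1,\dots,2m$; and so adding them lifts exactly those $m$ rows and those $m$ columns to sum $M$ while leaving the remaining $M$ rows and $M$ columns at sum $m$. Translating back through $r_i$ and $c_j=k-(\text{column sum})$, I would then verify that exactly $m$ of the $r_i$ equal $M$ with the other $M$ equal to $m$, and that exactly $M$ of the $c_j$ equal $M$ with the remaining $m$ equal to $m$, which matches the lemma verbatim.

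The only genuine obstacle is guaranteeing that the $m$ bumping entries can be chosen simultaneously to avoid the existing ones, to lie in distinct rows, and to lie in distinct columns, i.e.\ to form a partial permutation matrix disjoint from the support of the circulant. In general this is a bipartite matching question that one could resolve with Hall's theorem, since each row still carries $m+1$ available zeros (one could alternatively invoke the Gale--Ryser theorem to assert existence outright). I expect, however, that the explicit choice $(i,\,i+m)$ above sidesteps the matching argument entirely, as its entries are visibly zero and visibly injective in both coordinates. The remaining point demanding care is purely bookkeeping: the complementation relating the vertical counts $c_j$ to the column sums of $X$, where keeping $m=\lfloor k/2\rfloor$ and $M=\lceil k/2\rceil$ straight under $k-M=m$ and $k-m=M$ is exactly what makes the row-statement and the complemented column-statement emerge with the asymmetric multiplicities the lemma demands.
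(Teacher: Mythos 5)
Your proof is correct. The reformulation via the $\{0,1\}$-matrix $X$ (with $r_i$ the $i$th row sum and $c_j=k$ minus the $j$th column sum) is accurate; the circulant $x_{ij}=1$ iff $(j-i)\bmod k\in\{0,\dots,m-1\}$ does have all row and column sums equal to $m$; and in the odd case the $m$ extra ones at $(i,i+m)$, $i=1,\dots,m$, visibly sit on zero entries of the circulant in distinct rows and distinct columns, so the final tallies ($m$ rows of sum $M$, $M$ rows of sum $m$, $m$ columns of sum $M$, $M$ columns of sum $m$) translate through $c_j=k-(\text{column sum})$ into exactly the multiplicities the lemma asks for. As you suspected, no appeal to Hall or Gale--Ryser is needed once the diagonal choice is written down. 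The paper reaches the same conclusion by a different and simpler explicit construction: it places a horizontal line at $(i,j)$ precisely when $i\le\lfloor k/2\rfloor$ and $j\le\lceil k/2\rceil$, or $i>\lfloor k/2\rfloor$ and $j>\lceil k/2\rceil$ --- a two-block checker pattern --- which needs no parity split and immediately yields a slightly stronger statement pinning down \emph{which} rows and columns take which values (their Lemma 2.3). What your route buys is the framing as a $0$--$1$ matrix with prescribed row and column margins, which would generalize (via Gale--Ryser) to other target profiles; what it costs is the even/odd case analysis and the perturbation step, neither of which this particular lemma requires.
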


We will demonstrate this by a simple construction, establishing the following slightly stronger lemma.

\begin{lem}
For any $k \in \mathbb N$, there exists a way of placing either a horizontal or vertical line in each square of a $k \times k$ grid such that the following hold.
\begin{align*}
r_{1} = r_{2} = \cdots = r_{\left\lfloor \frac{k}{2} \right\rfloor} = c_{\left\lfloor \frac{k}{2} \right\rfloor + 1} = c_{\left\lfloor \frac{k}{2} \right\rfloor + 2} = \cdots = c_{k} &= \left\lceil\frac{k}{2}\right\rceil \\
c_{1} = c_{2} = \cdots = c_{\left\lfloor \frac{k}{2} \right\rfloor} = r_{\left\lfloor \frac{k}{2} \right\rfloor + 1} = r_{\left\lfloor \frac{k}{2} \right\rfloor + 2} = \cdots = r_{k} &= \left\lfloor\frac{k}{2}\right\rfloor
\end{align*}
\end{lem}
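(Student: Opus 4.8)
The plan is to prove the stronger statement by exhibiting an explicit block pattern, after which the verification is pure counting. Write $m=\lfloor k/2\rfloor$ and $M=\lceil k/2\rceil$, so that $m+M=k$ and $m\le M$. I would cut the $k\times k$ grid between row $m$ and row $m+1$ and between column $m$ and column $m+1$, producing four rectangular blocks: a top-left block on rows $1,\dots,m$ and columns $1,\dots,m$ (size $m\times m$), a top-right block on rows $1,\dots,m$ and columns $m+1,\dots,k$ (size $m\times M$), a bottom-left block on rows $m+1,\dots,k$ and columns $1,\dots,m$ (size $M\times m$), and a bottom-right block on rows $m+1,\dots,k$ and columns $m+1,\dots,k$ (size $M\times M$).

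First I would define the filling: place a vertical line in every cell of the top-left and bottom-right blocks, and a horizontal line in every cell of the top-right and bottom-left blocks. Equivalently, the cell $(i,j)$ receives a vertical line exactly when $i$ and $j$ lie on the same side of the cut (either both $\le m$ or both $>m$) and a horizontal line otherwise. Next I would read off the four chains of equalities by counting within blocks. A row $i$ with $i\le m$ meets the top-right block in its $M$ cells, all horizontal, and the top-left block in its $m$ cells, all vertical, so $r_i=M$; a row $i$ with $i>m$ meets the bottom-left block in $m$ horizontal cells and the bottom-right block in $M$ vertical cells, so $r_i=m$. Dually, a column $j$ with $j\le m$ meets the top-left block in $m$ vertical cells (and the bottom-left block in $M$ horizontal cells), giving $c_j=m$, while a column $j>m$ meets the bottom-right block in $M$ vertical cells, giving $c_j=M$. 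These four computations are precisely the asserted identities $r_1=\cdots=r_m=c_{m+1}=\cdots=c_k=M$ and $c_1=\cdots=c_m=r_{m+1}=\cdots=r_k=m$.

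I do not expect a genuine obstacle here: the entire content is choosing the block pattern so that the horizontal/vertical split in each row and column comes out correctly, and the rest is a one-line count per case. The only point worth a sanity check is global consistency, which a double count confirms at once, since $\sum_i r_i=2mM$ horizontal lines and $\sum_j c_j=m^2+M^2$ vertical lines add to $(m+M)^2=k^2$, the total number of cells. Finally, I would note that this stronger lemma immediately yields Lemma~\ref{grid existence}: exactly $m=\lfloor k/2\rfloor$ of the $r_i$ and $M=\lceil k/2\rceil$ of the $c_j$ equal $M=\lceil k/2\rceil$, while the remaining $M=\lceil k/2\rceil$ of the $r_i$ and $m=\lfloor k/2\rfloor$ of the $c_j$ equal $m=\lfloor k/2\rfloor$, as required.
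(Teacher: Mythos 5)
Your proof is correct and takes essentially the same route as the paper: both exhibit an explicit four-block pattern on the $k\times k$ grid and verify the row and column counts by direct counting within blocks. The only difference is the choice of blocks --- the paper places horizontal lines exactly where $i\le\lfloor k/2\rfloor,\ j\le\lceil k/2\rceil$ or $i>\lfloor k/2\rfloor,\ j>\lceil k/2\rceil$, which for odd $k$ produces the column values in the opposite order ($c_1=\cdots=c_{\lceil k/2\rceil}=\lceil k/2\rceil$), so it verifies the displayed identities only up to relabeling the columns, whereas your symmetric cut at $\lfloor k/2\rfloor$ with ``same side $\Rightarrow$ vertical'' matches the stated indices exactly; since only the multiset of values is used in Lemma~\ref{grid existence} and beyond, both versions serve equally well.
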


\begin{proof}
This lemma can be accomplished quite simply.  Consider the square in the $i$th row and the $j$th column.  If $i \leq \left\lfloor \frac{k}{2} \right\rfloor$ and $j \leq \left\lceil \frac{k}{2} \right\rceil$, place a horizontal line in this square.  Also, if $i > \left\lfloor \frac{k}{2} \right\rfloor$ and $j > \left\lceil \frac{k}{2} \right\rceil$, place a horizontal line in this square.  Otherwise, place a vertical line in this square.  Since $\left\lceil \frac{k}{2}\right\rceil + \left\lfloor \frac{k}{2} \right\rfloor = k$, it is clear that the desired property holds.  The diagrams on the left side of figures \ref{k=2} and \ref{k=3} demonstrate the design of this construction for $k = 2$ and $k = 3$ respectively.  The construction is visually similar for larger $k$.
\end{proof}

Now, we are ready to construct our omnimosaic.  The following construction and lemma will detail how to construct an omnimosaic based on a given $k \times k$ grid.  The main result of the section will be an immediate consequence.

Let $\mathcal K$ be a placement of horizontal and vertical lines in a $k \times k$ grid.  We will construct a $\left(\sum_{i = 1}^{k}a^{r_i}\right)\times \left(\sum_{i = 1}^{k}a^{c_i}\right)$ matrix $\mathcal M_{\mathcal K}$ based on this diagram.

First, consider an empty $\left(\sum_{i = 1}^{k}a^{r_i}\right)\times \left(\sum_{i = 1}^{k}a^{c_i}\right)$ matrix.  Let $s_l$ be the $l$th row of this matrix and $d_m$ be the $m$th column. We define the $i$th row-region $R_i$ to be the set $\left\{s_l : \sum_{t = 1}^{i-1}a^{r_t} < l \leq \sum_{t = 1}^{i}a^{r_t}\right\}$. Similarly, we define the $j$th column-region $C_j$ to be the set $\left\{d_m : \sum_{t = 1}^{j-1}a^{c_t} < m \leq \sum_{t = 1}^{j}a^{c_t}\right\}$. Then, the $(i,j)$th region of the matrix, denoted $\mathcal M'_{i,j}$ is the submatrix consisting of all elements in both the $i$th row-region and the $j$th column region.

Let $f_i$ be a bijection from the set $\{1,2,\ldots,r_i\}$ to the set of $j$ such that the square in the $i$th row and $j$th column of $\mathcal K$ contains a horizontal line.  Finally, let $W_{i} = W_{a,r_{i}}$ be the set of all $r_{i}$ letter words on the alphabet $a$, let $g_i = g_{a,r_{i}}: \{1,2,\ldots,a^{r_{i}}\} \to W$ be a bijection, and let $h(w,t)$ denote the $t$th letter of the word $w$.  

Then, for each $t \in \{1,2,\ldots,r_{1}\}$, let every element in the $l$th row of the $(1,f_i(t))$th region be $h(g_i(l),t)$.  Define similar functions for the vertical lines based on columns and repeat to fill in the remaining regions.  

This completes the construction of the matrix $\mathcal M_{\mathcal K}$.  In some sense, this construction can be seen as expanding the horizontal lines from the diagram so that by choosing a row in a given row-region, all elements corresponding to horizontal lines in the associated row of $\mathcal K$ are fixed, regardless of the choice of columns, with the expansion necessary to ensure that all possible sequences of letters are included.  This will be made formal in Lemma \ref{Construction Works}.  Figures \ref{k=2} and \ref{k=3} give examples of the process described here.

\begin{lem} \label{Construction Works}
$\mathcal M_{\mathcal K}$, as defined above, is an omnimosaic for $a,k$.
\end{lem}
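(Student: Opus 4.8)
The plan is to show directly that $\mathcal{M}_{\mathcal{K}}$ contains an arbitrary target matrix $T = (T_{ij})_{1 \le i,j \le k}$ over $\mathcal{A}$ as a submatrix. The natural way to locate $T$ is to select exactly one row from each of the $k$ row-regions $R_1, \ldots, R_k$ and one column from each of the $k$ column-regions $C_1, \ldots, C_k$; the $k \times k$ submatrix they cut out has its $(i,j)$ entry lying in the region $\mathcal{M}'_{i,j}$, and the task reduces to choosing these rows and columns so that this entry equals $T_{ij}$ for every $i,j$.

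First I would record how a single entry of such a submatrix is determined. Suppose we have picked the row of $R_i$ whose local index is $l_i$ and the column of $C_j$ whose local index is $m_j$. By the construction, the value placed in the region $\mathcal{M}'_{i,j}$ depends on whether the square $(i,j)$ of $\mathcal{K}$ carries a horizontal or a vertical line. If it is horizontal, then $j = f_i(t)$ for a unique $t \in \{1, \ldots, r_i\}$ and the entry equals $h(g_i(l_i), t)$, which depends only on $l_i$ (equivalently on the word $g_i(l_i) \in W_i$) and is constant across the whole column-region $C_j$. If it is vertical, the symmetric rule makes the entry depend only on $m_j$ (equivalently on the length-$c_j$ word assigned by the vertical analogue of $g_i$) and be constant across the row-region $R_i$. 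This is the crux: because each square of $\mathcal{K}$ carries exactly one line, each cell $(i,j)$ of the target is governed by exactly one of the two independent sets of choices.

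Next I would invert these relations to pick the rows and columns. Fix $i$ and read off the horizontal squares of row $i$: their column-indices are precisely $f_i(1), \ldots, f_i(r_i)$, and the desired entries $T_{i, f_i(t)}$ assemble into a unique word $w_i \in W_i$ with $h(w_i, t) = T_{i, f_i(t)}$. Since $g_i$ is a bijection onto $W_i$, there is exactly one local index $l_i$ with $g_i(l_i) = w_i$, and choosing this row of $R_i$ forces every horizontal cell of row $i$ to match $T$ simultaneously, regardless of the column choices. Performing the symmetric construction on each column $j$ --- reading the vertical squares of column $j$, assembling the length-$c_j$ word dictated by the relevant $T_{ij}$, and using the bijectivity of the vertical analogue of $g_j$ --- pins down the columns $m_j$ so that every vertical cell matches $T$.

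Finally, since the horizontal and vertical cells partition the $k \times k$ grid and the row choices affect only horizontal cells while the column choices affect only vertical cells, the two families of choices never conflict: the resulting submatrix agrees with $T$ in every position. As $T$ was arbitrary, $\mathcal{M}_{\mathcal{K}}$ is an omnimosaic for $a,k$. The main obstacle is not any single estimate but the bookkeeping of the preceding paragraph --- verifying cleanly that one row choice in $R_i$ fixes all $r_i$ horizontal entries of row $i$ at once and that this is genuinely independent of the column selections (and vice versa), which is exactly what the ``expansion'' picture in the construction is designed to guarantee.
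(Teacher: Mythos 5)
Your proof is correct and follows essentially the same route as the paper's: pick the row of $R_i$ indexed by $g_i^{-1}$ of the word formed by the target entries at the horizontal squares of row $i$, do the symmetric thing for columns, and observe that the two families of choices govern disjoint sets of cells and so never conflict. Your write-up simply makes the bookkeeping (which cell is controlled by which choice) more explicit than the paper does.
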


\begin{proof}
Let $T$ be a target $k \times k$ matrix, let $T_i$ be its $i$th row, and let $T_{i,j}$ be the element in the $i$th row and $j$th column.  Then for each $i$, choose row $i$ to be the $g^{-1}(T_{i,f_i(1)}T_{i,f_i(1)}\cdots T_{i,f_i(r_i)})$th row of the $i$th row-region of $\mathcal M_{\mathcal K}$.  Then, all elements corresponding to a horizontal line in $\mathcal K$ will be correct, so long as the $j$th column selected belongs to the $j$th column-region of $\mathcal M_{\mathcal K}$.  Repeating this process on the columns guarantees this and also guarantees that all elements corresponding to a vertical line will match the associated element in $T$.  Thus, $T$ is a submatrix of $\mathcal M_{\mathcal K}$ for all $k \times k$ matrices $T$, so $\mathcal M_{\mathcal K}$ is an omnimosaic, as desired.
\end{proof}

\begin{lem} \label{omnimosaic Size}
For each placement of horizontal and vertical lines in a $k \times k$ grid, $\mathcal K$, there exists an omnimosaic with $\sum_{i = 1}^{k}a^{r_i}$ rows and $\sum_{i = 1}^{k}a^{c_i}$ columns.
\end{lem}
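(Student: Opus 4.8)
The plan is to treat this lemma as the bookkeeping consequence of the construction of $\mathcal M_{\mathcal K}$ together with Lemma \ref{Construction Works}, since essentially all of the substance has already been carried out. First I would apply the construction verbatim to the given placement $\mathcal K$, producing the matrix $\mathcal M_{\mathcal K}$; by its very definition this matrix is declared to have $\sum_{i=1}^k a^{r_i}$ rows and $\sum_{i=1}^k a^{c_i}$ columns, so the only thing requiring confirmation is that these counts are genuine, i.e.\ that the row-regions and column-regions partition the rows and columns cleanly and have the advertised sizes.

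The key step is therefore to verify the cardinalities of the regions. I would observe that the $i$th row-region $R_i = \{s_l : \sum_{t=1}^{i-1} a^{r_t} < l \le \sum_{t=1}^{i} a^{r_t}\}$ consists of exactly those indices $l$ lying in a half-open block of length $\sum_{t=1}^{i} a^{r_t} - \sum_{t=1}^{i-1} a^{r_t} = a^{r_i}$, so $|R_i| = a^{r_i}$. Because consecutive blocks abut (the upper endpoint $\sum_{t=1}^{i-1} a^{r_t}$ of block $i-1$ is the strict lower bound of block $i$) and the last block terminates at $\sum_{t=1}^{k} a^{r_t}$, the regions $R_1, \ldots, R_k$ tile the row index set $\{1, \ldots, \sum_{i=1}^k a^{r_i}\}$ with no gaps or overlaps. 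The identical telescoping argument applied to the column-regions $C_j$ gives $|C_j| = a^{c_j}$ and a clean tiling of the columns. Hence $\mathcal M_{\mathcal K}$ is a well-defined matrix of precisely the claimed dimensions.

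Finally I would invoke Lemma \ref{Construction Works}, which asserts that $\mathcal M_{\mathcal K}$ contains every $k \times k$ matrix over $\a$ as a submatrix and is thus an omnimosaic for $a,k$. Combining this omnimosaic property with the dimension count exhibits $\mathcal M_{\mathcal K}$ as an omnimosaic with $\sum_{i=1}^k a^{r_i}$ rows and $\sum_{i=1}^k a^{c_i}$ columns, as required. I do not anticipate any genuine obstacle: the real content lives in Lemma \ref{Construction Works}, and the present statement is an assembly lemma that simply packages the construction's output. The only point demanding a moment's care is the telescoping collapse of the block lengths to $a^{r_i}$ and $a^{c_j}$, which is immediate.
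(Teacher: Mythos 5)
Your proposal is correct and follows essentially the same route as the paper, which simply notes that the lemma ``follows trivially from our construction and Lemma \ref{Construction Works}''; your added verification that the row- and column-regions tile the index sets with blocks of sizes $a^{r_i}$ and $a^{c_j}$ is just a more explicit rendering of that same bookkeeping.
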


This lemma follows trivially from our construction and Lemma \ref{Construction Works}.

Now, we must simply apply Lemma \ref{omnimosaic Size} with $\mathcal K$ given by Lemma \ref{grid existence} to prove Theorem \ref{Construction Upper Bound}, since we can add additional rows to the resulting  omnimosaic, yielding a square omnimosaic with the desired side length.

\begin{figure}
\begin{center}
\begin{pspicture}(-2,-1)(6,1)

\pcline[linewidth=0.5pt,linecolor=black](-1,-1)(-1,1)
\pcline[linewidth=0.5pt,linecolor=black](0,-1)(0,1)
\pcline[linewidth=0.5pt,linecolor=black](1,-1)(1,1)
\pcline[linewidth=0.5pt,linecolor=black](-1,-1)(1,-1)
\pcline[linewidth=0.5pt,linecolor=black](-1,0)(1,0)
\pcline[linewidth=0.5pt,linecolor=black](-1,1)(1,1)

\pcline[linewidth=0.5pt,linecolor=black](-0.75,0.5)(-0.25,0.5)
\pcline[linewidth=0.5pt,linecolor=black](0.25,-0.5)(0.75,-0.5)
\pcline[linewidth=0.5pt,linecolor=black](0.5,0.25)(0.5,0.75)
\pcline[linewidth=0.5pt,linecolor=black](-0.5,-0.75)(-0.5,-0.25)

\pcline[linewidth=0.5 pt, linecolor=black,arrowsize=3pt 3]{->}(1.25,0)(2.75,0)

\pcline[linewidth=0.5pt,linecolor=black](3,-1)(3,1)
\pcline[linewidth=0.5pt,linecolor=black](4,-1)(4,1)
\pcline[linewidth=0.5pt,linecolor=black](5,-1)(5,1)
\pcline[linewidth=0.5pt,linecolor=black](3,-1)(5,-1)
\pcline[linewidth=0.5pt,linecolor=black](3,0)(5,0)
\pcline[linewidth=0.5pt,linecolor=black](3,1)(5,1)

\uput[90](3.33,0.0){$1$}
\uput[90](3.67,0.0){$1$}
\uput[90](3.33,0.45){$0$}
\uput[90](3.67,0.45){$0$}
\uput[90](4.33,0.0){$0$}
\uput[90](4.67,0.0){$1$}
\uput[90](4.33,0.45){$0$}
\uput[90](4.67,0.45){$1$}
\uput[90](4.33,-1.0){$1$}
\uput[90](4.67,-1.0){$1$}
\uput[90](4.33,-0.55){$0$}
\uput[90](4.67,-0.55){$0$}
\uput[90](3.33,-1.0){$0$}
\uput[90](3.67,-1.0){$1$}
\uput[90](3.33,-0.55){$0$}
\uput[90](3.67,-0.55){$1$}

\end{pspicture}
\end{center}
\caption{Base Diagram and Construction for $a = 2$, $k = 2$ with $n = 4$}
\label{k=2} 
\end{figure}
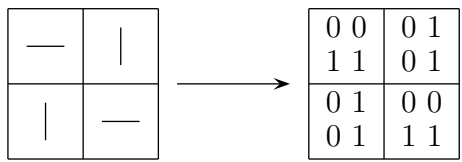

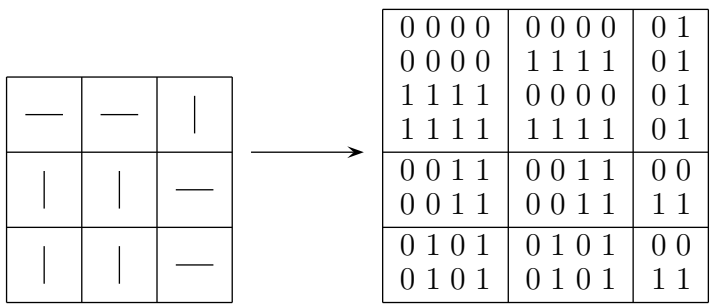
\begin{figure} 
\begin{center}
\begin{pspicture}(-3,-1)(9,3)

\pcline[linewidth=0.5pt,linecolor=black](-2,-1)(-2,2)
\pcline[linewidth=0.5pt,linecolor=black](-1,-1)(-1,2)
\pcline[linewidth=0.5pt,linecolor=black](0,-1)(0,2)
\pcline[linewidth=0.5pt,linecolor=black](1,-1)(1,2)
\pcline[linewidth=0.5pt,linecolor=black](-2,-1)(1,-1)
\pcline[linewidth=0.5pt,linecolor=black](-2,0)(1,0)
\pcline[linewidth=0.5pt,linecolor=black](-2,1)(1,1)
\pcline[linewidth=0.5pt,linecolor=black](-2,2)(1,2)

\pcline[linewidth=0.5pt,linecolor=black](-1.75,1.5)(-1.25,1.5)
\pcline[linewidth=0.5pt,linecolor=black](-0.75,1.5)(-0.25,1.5)
\pcline[linewidth=0.5pt,linecolor=black](0.5,1.75)(0.5,1.25)
\pcline[linewidth=0.5pt,linecolor=black](-1.5,0.25)(-1.5,0.75)
\pcline[linewidth=0.5pt,linecolor=black](-1.5,-0.75)(-1.5,-0.25)
\pcline[linewidth=0.5pt,linecolor=black](-0.5,0.25)(-0.5,0.75)
\pcline[linewidth=0.5pt,linecolor=black](-0.5,-0.75)(-0.5,-0.25)
\pcline[linewidth=0.5pt,linecolor=black](0.25,0.5)(0.75,0.5)
\pcline[linewidth=0.5pt,linecolor=black](0.25,-0.5)(0.75,-0.5)

\pcline[linewidth=0.5 pt, linecolor=black,arrowsize=3pt 3]{->}(1.25,1.0)(2.75,1.0)

\pcline[linewidth=0.5pt,linecolor=black](3,-1)(3,2.9)
\pcline[linewidth=0.5pt,linecolor=black](4.67,-1)(4.67,2.9)
\pcline[linewidth=0.5pt,linecolor=black](6.33,-1)(6.33,2.9)
\pcline[linewidth=0.5pt,linecolor=black](7.33,-1)(7.33,2.9)
\pcline[linewidth=0.5pt,linecolor=black](3,-1)(7.33,-1)
\pcline[linewidth=0.5pt,linecolor=black](3,0)(7.33,0)
\pcline[linewidth=0.5pt,linecolor=black](3,1)(7.33,1)
\pcline[linewidth=0.5pt,linecolor=black](3,2.9)(7.33,2.9)

\uput[90](3.33,2.35){$0$}
\uput[90](3.67,2.35){$0$}
\uput[90](4.00,2.35){$0$}
\uput[90](4.33,2.35){$0$}
\uput[90](5.00,2.35){$0$}
\uput[90](5.33,2.35){$0$}
\uput[90](5.67,2.35){$0$}
\uput[90](6.00,2.35){$0$}
\uput[90](6.67,2.35){$0$}
\uput[90](7.00,2.35){$1$}

\uput[90](3.33,1.9){$0$}
\uput[90](3.67,1.9){$0$}
\uput[90](4.00,1.9){$0$}
\uput[90](4.33,1.9){$0$}
\uput[90](5.00,1.9){$1$}
\uput[90](5.33,1.9){$1$}
\uput[90](5.67,1.9){$1$}
\uput[90](6.00,1.9){$1$}
\uput[90](6.67,1.9){$0$}
\uput[90](7.00,1.9){$1$}

\uput[90](3.33,1.45){$1$}
\uput[90](3.67,1.45){$1$}
\uput[90](4.00,1.45){$1$}
\uput[90](4.33,1.45){$1$}
\uput[90](5.00,1.45){$0$}
\uput[90](5.33,1.45){$0$}
\uput[90](5.67,1.45){$0$}
\uput[90](6.00,1.45){$0$}
\uput[90](6.67,1.45){$0$}
\uput[90](7.00,1.45){$1$}

\uput[90](3.33,1.0){$1$}
\uput[90](3.67,1.0){$1$}
\uput[90](4.00,1.0){$1$}
\uput[90](4.33,1.0){$1$}
\uput[90](5.00,1.0){$1$}
\uput[90](5.33,1.0){$1$}
\uput[90](5.67,1.0){$1$}
\uput[90](6.00,1.0){$1$}
\uput[90](6.67,1.0){$0$}
\uput[90](7.00,1.0){$1$}

\uput[90](3.33,0.45){$0$}
\uput[90](3.67,0.45){$0$}
\uput[90](4.00,0.45){$1$}
\uput[90](4.33,0.45){$1$}
\uput[90](5.00,0.45){$0$}
\uput[90](5.33,0.45){$0$}
\uput[90](5.67,0.45){$1$}
\uput[90](6.00,0.45){$1$}
\uput[90](6.67,0.45){$0$}
\uput[90](7.00,0.45){$0$}

\uput[90](3.33,0.0){$0$}
\uput[90](3.67,0.0){$0$}
\uput[90](4.00,0.0){$1$}
\uput[90](4.33,0.0){$1$}
\uput[90](5.00,0.0){$0$}
\uput[90](5.33,0.0){$0$}
\uput[90](5.67,0.0){$1$}
\uput[90](6.00,0.0){$1$}
\uput[90](6.67,0.0){$1$}
\uput[90](7.00,0.0){$1$}

\uput[90](3.33,-0.55){$0$}
\uput[90](3.67,-0.55){$1$}
\uput[90](4.00,-0.55){$0$}
\uput[90](4.33,-0.55){$1$}
\uput[90](5.00,-0.55){$0$}
\uput[90](5.33,-0.55){$1$}
\uput[90](5.67,-0.55){$0$}
\uput[90](6.00,-0.55){$1$}
\uput[90](6.67,-0.55){$0$}
\uput[90](7.00,-0.55){$0$}

\uput[90](3.33,-1.0){$0$}
\uput[90](3.67,-1.0){$1$}
\uput[90](4.00,-1.0){$0$}
\uput[90](4.33,-1.0){$1$}
\uput[90](5.00,-1.0){$0$}
\uput[90](5.33,-1.0){$1$}
\uput[90](5.67,-1.0){$0$}
\uput[90](6.00,-1.0){$1$}
\uput[90](6.67,-1.0){$1$}
\uput[90](7.00,-1.0){$1$}

\end{pspicture}
\end{center}
\caption{Base Diagram and Construction for $a = 2$, $k = 3$ with $n = 10$}
\label{k=3}
\end{figure}

If we had instead chosen our $\mathcal K$ diagram to contain all horizontal lines, we would have instead constructed a rectangular ``thin strip,'' omnimosaic which consists of a list of all words of length $k$ repeated $k$ times.  Note however that the dimensions of this strip would be $k\times ka^k$.  In other words the ``areas" of the square and thin strip mosaics would be the same.  The thin strip omnimosaic, shown below for $a=k=2$,  is clearly the smallest possible array of dimension $k\times ka^k$ and provides the most intuitive construction of an omnimosaic:

$$\begin{pmatrix}
1&1\cr
1&0\cr
0&1\cr
0&0\cr
1&1\cr
1&0\cr
0&1\cr
0&0\cr
\end{pmatrix}
$$
It is also noteworthy that our strategy extends naturally to higher dimensional cases.  In this situation, rather then assigning to each unit square in a $k\times k$ square  a line parallel to one of the sides of the square, it is necessary to assign to each unit $d$-cube with a $d-1$ plane parallel to one of its sides.  By following a similar process, one can find a $d$-dimensional omnimosaic with side length of approximately $\displaystyle ka^{\frac{k^{d-1}}{d}}$.

\section{Threshold Behavior}  Isoperimetric considerations suggest that one ought to be able to produce smaller (in the sense of area) omnimosaics in square case than in the ``thin strip" case.  In this section, we use the probabilistic method to show that not only is this true, but that the pigeonhole bound is almost the best possible.  In other words, square omnimosaics can be constructed that almost yield a perfect covering of the $k\times k$ matrices.  

We use an elementary method (linearity of expectation and Markov's inequality) together with Suen's correlation inequality (see, e.g., \cite{janson}).  Let each entry of an $n\times n$ matrix be independently chosen to be one of the colors in the ``palette" ${\cal A}=\{1,2,\ldots,a\}$ with probability $1/a$.  Let $X$ be the number of ``missing" $k\times k$ matrices, i.e. matrices that cannot be found as a submatrix of the random $n\times n$ array.  Then
\[X=\sum_{j=1}^{a^{k^2}}I_j,\]
where $I_j=1$ (or $I_j=0$) according as the $j$th matrix is missing (or present) as a submatrix.  Our strategy can be summarized in a single line.  We will show that
\begin{eqnarray}\p({\rm array\ is\ not\ an\ omnimosaic})&=&\p(X\ge1)\nonumber\\ &\le& \e(X)\nonumber\\&=&\sum_{j=1}^{a^{k^2}}\p(I_j=1)<1\enspace({\rm or}\ \to0)\end{eqnarray}
if $n\ge\frac{ka^{k/2}}{e}(1+o(1))$, where we have used Markov's inequality and linearity of expectation, and where the last claim will be a consequence of Suen's inequality.

\medskip

\noindent{\bf Remark}  We remind the reader that given an independent sequence $X_1,X_2,\ldots$ of coin flips, the waiting time for a pure $k$-run of heads (or tails) is the largest of any waiting time among all $k$-patterns.  For example, the waiting time for $HHHHH$ is $2^1 + 2^2 + 2^3 + 2^4 +
2^5 +2^6 = 126$, but $HHTTHH$ occurs, on average, after just $2+4+64 = 70$
flips. The underlying reason for this is that a pure head run of length six
overlaps itself in six ways, but overlaps of $HHTTHH$ with itself can only
occur in one, two, or six places.  The situation is markedly different when we consider embedded subsequences rather than embedded strings:  In \cite{omni} it was shown that for a sequence $S$ on length $n$ on ${\cal A}$, the probability that a $k$-sequence is
missing as a subsequence of $S$ is {\it equal} to the probability that any other $k$-sequence is missing
in $S$.  It is our conjecture, however, that in the context of two dimensional mosaics, denoting by $J$ a matrix with monochromatic entries,
\[\p(J\ {\rm is\ missing})\ge\p(M\ {\rm is\ missing}),\]
where $M$ is a generic ${\cal A}$-valued matrix.  We were, however, unable to prove this fact, or even something weaker such as 
\[\p(J\ {\rm is\ missing})\ge\frac{1}{K}\p(M\ {\rm is\ missing})\] for some constant $K>1$; had we been able to, our strategy above would have been modified as follows:
\begin{eqnarray*}\p({\rm array\ is\ not\ an\ omnimosaic})&=&\p(X\ge1)\\
&\le& \e(X)\\
&\le&{a^{k^2}}\p(J\ {\rm is\ missing})<1\enspace({\rm or}\ \to0,)\end{eqnarray*}
where the convergence to zero would have been a consequence of the easier Janson exponential inequality (\cite{alon}).
\begin{thm}$$\omega(k,a)\le\frac{ka^{k/2}}{e}\lr1+o(1)\rr,$$ where we may take $o(1)=2\log k/k,\enspace k\to\infty.$\end{thm}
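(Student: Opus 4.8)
\noindent The plan is to run the first moment computation sketched in the display above, using Suen's inequality to absorb the dependencies among overlapping windows. Fix $n=\left\lceil\frac{ka^{k/2}}{e}\left(1+\frac{2\log k}{k}\right)\right\rceil$ and colour the $n^2$ cells independently and uniformly from $\a$. By Markov's inequality and linearity, $\p(\text{array is not an omnimosaic})\le\e(X)=\sum_{M}\p(M\text{ missing})\le a^{k^2}\max_{M}\p(M\text{ missing})$, so it suffices to bound $\p(M\text{ missing})$ by a quantity \emph{uniform in the target} $M$ that is comfortably smaller than $a^{-k^2}$. (I deliberately avoid asserting these probabilities are equal: as the Remark above stresses, they are not.)

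For a fixed $M$, index the $\binom{n}{k}^2$ windows by $\alpha=(R_\alpha,C_\alpha)$, a choice of $k$ rows and $k$ columns, and let $I_\alpha$ indicate that the submatrix on $R_\alpha,C_\alpha$ equals $M$, so $\p(I_\alpha=1)=a^{-k^2}$ and $M$ is missing iff all $I_\alpha=0$. Two windows are dependent exactly when they share a cell, i.e. $R_\alpha\cap R_\beta\ne\varnothing$ and $C_\alpha\cap C_\beta\ne\varnothing$; this is the dependency graph. With $\mu=\sum_\alpha\p(I_\alpha=1)=\binom{n}{k}^2a^{-k^2}$, $\delta=\max_\alpha\sum_{\beta\sim\alpha}\p(I_\beta=1)$ and $\Delta=\sum_{\alpha\sim\beta}\e(I_\alpha I_\beta)$, Suen's inequality \cite{janson} gives $\p(M\text{ missing})\le\exp(-\mu+\Delta e^{2\delta})$, uniformly in $M$. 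Since $n$ is exponentially large in $k$, a sharp Stirling estimate gives $n(n-1)\cdots(n-k+1)=n^k(1+o(1))$ and $\binom{n}{k}=(ne/k)^k(2\pi k)^{-1/2}(1+o(1))$; as $ne/k=a^{k/2}(1+\tfrac{2\log k}{k})$ this yields $\mu=\frac{(1+2\log k/k)^{2k}}{2\pi k}(1+o(1))=\frac{k^3}{2\pi}(1+o(1))$, which already dwarfs the quantity $k^2\log a$ that the first moment must beat.

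The crux is showing the Suen correction $\Delta e^{2\delta}$ is negligible; in fact both factors vanish. Grouping dependent pairs by their overlaps $u=|R_\alpha\cap R_\beta|\ge1$ and $v=|C_\alpha\cap C_\beta|\ge1$, the windows share $uv$ cells, their union has $2k^2-uv$ cells, and $\e(I_\alpha I_\beta)\le a^{uv-2k^2}$ (with equality or $0$ according to whether $M$ is consistent on the shared cells — this is the $M$-uniform bound). Counting the ordered dependent pairs with prescribed $(u,v)$ gives $\Delta\le\mu^2\sum_{u,v\ge1,\,(u,v)\ne(k,k)}p(u)p(v)a^{uv}$, where $p(u)=\binom{k}{u}\binom{n-k}{k-u}/\binom{n}{k}$ is the hypergeometric overlap weight and the diagonal $(k,k)=\{\alpha=\beta\}$ is excluded. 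Because $k/n$ is of order $a^{-k/2}$, each $p(u)\le\binom{k}{u}(k/(n-k))^u$ is minute; a short check of the weights $p(u)p(v)a^{uv}$ shows the sum is dominated by the near-diagonal term $(u,v)=(k,k-1)$, whence $\Delta=O\!\bigl(\mu\,k(n-k)a^{-k}\bigr)=O(k^5a^{-k/2})\to0$. Likewise $\delta=a^{-k^2}\bigl(\binom{n}{k}-\binom{n-k}{k}\bigr)^2=\mu\,O(k^4/n^2)\to0$, so $e^{2\delta}\to1$.

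Putting the pieces together, $\p(M\text{ missing})\le e^{-\mu+o(1)}$ uniformly in $M$, whence $\e(X)\le a^{k^2}e^{-\mu+o(1)}=\exp\!\bigl(k^2\log a-\tfrac{k^3}{2\pi}(1+o(1))\bigr)\to0$, the cubic term swamping the quadratic. Thus the random array is an omnimosaic with probability tending to $1$, so in particular one exists, giving $\omega(k,a)\le n=\frac{ka^{k/2}}{e}(1+2\log k/k)$. The main obstacle is the estimate of $\Delta$: the co-occurrence weight $a^{uv}$ makes the heaviest contribution come from \emph{nearly coincident} windows rather than from barely overlapping ones, and the work lies in verifying that even these are crushed by the exponential size of $n$ — precisely where Suen's inequality, rather than naive independence, earns its keep. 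The stated rate $2\log k/k$ is a clean choice sitting safely above the bare first-moment threshold $\tfrac32\log k/k$ at which $\mu$ overtakes $k^2\log a$.
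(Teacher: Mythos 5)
Your proposal follows the paper's argument step for step: a uniformly random $n\times n$ array, Markov plus linearity reducing the problem to a bound on $\p(M\ \mathrm{missing})$ that is uniform in the target $M$, Suen's inequality in the form $\exp(-\mu+\Delta e^{2\delta})$, the decomposition of $\Delta$ over shared row/column counts $(u,v)$ with co-occurrence weight $a^{uv-2k^2}$, and the identification of nearly coincident windows as the dominant contribution. Your endgame is in fact slightly cleaner than the paper's: you evaluate $\mu\sim k^3/2\pi$ directly and observe that it swamps $k^2\log a$, whereas the paper uses $\mu\le k^4$ where an upper bound is needed and then tunes $n-k$ explicitly to force $a^{k^2}e^{-\mu}<1$; both routes certify the rate $2\log k/k$.

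The one genuine gap is the sentence ``a short check of the weights $p(u)p(v)a^{uv}$ shows the sum is dominated by the near-diagonal term $(u,v)=(k,k-1)$.'' That check is the entire technical content of the paper's proof (its Lemmas 3.2--3.6) and it is not short. Writing $\varphi(u,v)=\binom{k}{u}\binom{k}{v}\binom{n}{k-u}\binom{n}{k-v}a^{uv}$ (your summand up to replacing $\binom{n-k}{\cdot}$ by $\binom{n}{\cdot}$), one must (i) show $\varphi$ is monotone or unimodal in each variable separately, (ii) show that any interior critical point lies on the diagonal $u=v$, because $(k-x)^2a^{-x}/\bigl((x+1)(n-k+x+1)\bigr)$ is strictly decreasing, (iii) show the diagonal restriction is first decreasing and then increasing, so that the only competitors to $\varphi(k,k-1)$ are the corner values $\varphi(1,1)$ and $\varphi(k-1,k-1)$, and (iv) carry out those two comparisons. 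The comparison against $\varphi(1,1)$ is genuinely delicate: the dominant factors $a^{k^2-k}$ cancel exactly and one is left verifying $a^{k/2-1}\gtrsim\mathrm{poly}(k)$, which holds only for $k$ sufficiently large; the intermediate steps also require hypotheses of the form $n\ge k^2a/2$ and $n\le a^{k-1}/k$ that must be checked for your chosen $n$. Your claimed conclusion $\Delta=O(\mathrm{poly}(k)\,a^{-k/2})\to0$ is correct, but as written the dominance of $(k,k-1)$ is asserted rather than proved, and a priori the growing weight $a^{uv}$ could have allowed an interior or opposite-corner term to win; supplying the unimodality and critical-point analysis is what closes the argument.
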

\begin{proof}For any $k\times k$ matrix $M$,
\[\p(M\ {\rm is\ missing})=\p\lr\bigcap_{j=1}^{{\nk}^2}B_j^C\rr=\p(Y=0),\]
where $A^C$ denotes the complement of $A$; $B_j$ occurs (equivalently $J_j=1$) if matrix $M$ is present in the $j$th of ${\nk}^2$ possible locations; and $Y=\sum_jJ_j$.  By Suen's inequality (the version in Theorem 2 of \cite{janson}), we have for any matrix $M$, 
$$\p(M\ {\rm is\ missing})\le\exp\lr-\mu+\Delta_M e^{2\delta}\rr,\eqno(*)$$
where
\[\mu=\e(Y)={\nk}^2\frac{1}{a^{k^2}},\]
$i\sim j$ if the $i$th and $j$th locations share at least one position (and thus at least one row and column),
\[\Delta_M=\sum_{\{i,j\}:i\sim j}\e(I_iI_j),\]
\[\delta_i=\sum_{j\sim i}\p(I_j=1),\]
and
\[\delta=\max_i\delta_i.\]
The computation of $\Delta_M$ is the first component of the proof, and follows the development in \cite{heidi} and \cite{ericben} (where the focus was, given $n$ and $k$, to find a threshold $p$, and where the ``continuous time" analysis was done using gamma functions).  
\begin{eqnarray}
\Delta_M&=&\sum_{\{i,j\}:i\sim j}\e(I_iI_j)\nonumber\\
&\le&\frac{{\nk}^2}{a^{2k^2}}\sum_{{r,c=1}\atop{r+c<2k}}^k{k\choose r}{k\choose c} {n\choose{k-r}}{n\choose {k-c}}a^{rc}\nonumber\\
&=&\frac{{\nk}^2}{a^{2k^2}}\sum_{{r,c=1}\atop{r+c<2k}}^k\varphi(r,c)\nonumber\\
&\le&\frac{{\nk}^2}{a^{2k^2}}k^2\max\{\varphi(r,c):1\le r,c\le k; r+c<2k\},
\end{eqnarray}
where $\varphi(r,c)={k\choose r}{k\choose c} {n\choose{k-r}}{n\choose {k-c}}a^{rc}$.  The analysis of $\varphi(r,c)$ is the content of the next few lemmas.  Note that in (4) above, we have upper bounded $\Delta_M$ by $\Delta_J$.
\begin{lem}
Given $c$, $1\le c\le k$, $\varphi(.,c)$ is either monotone or unimodal as a function of $r$.
\end{lem}
\begin{proof}
The function $\varphi(.,c)$ is increasing if and only if 
\[\frac{\varphi(r+1,c)}{\varphi(r,c)}=\frac{(k-r)^2}{(r+1)(n-k+r+1)}a^c\ge1,\]
or if
\begin{equation}\frac{(k-r)^2}{(r+1)(n-k+r+1)}\ge\frac{1}{a^c}.\end{equation}
But the left side of (5) is a decreasing function of $r$.  This completes the proof.
\end{proof}
\begin{lem} $\varphi(1,1)\ge\varphi(2,1)$ if $n\ge \frac{k^2a}{2}+k-2.$\end{lem}
\begin{proof} This follows easily from the definition of $\varphi$.\end{proof}
\begin{lem} $\varphi(k,k)\ge\varphi(k-1,k)$, if $n\le a^k/k$ and thus $\varphi(k-1,k)=\max\{\varphi(r,k):1\le r\le k-1\}$ under this condition. \end{lem}
\begin{proof}Again, this follows easily from the definition of $\varphi$.\end{proof}
\begin{lem} $\varphi(k-1,k)\ge\varphi(1,1)$ provided that $k$ is large enough.
\end{lem}
\begin{proof} The condition $\varphi(k-1,k)\ge\varphi(1,1)$ is equivalent to 
\[nka^{k(k-1)}\ge k^2{{n}\choose{k-1}}^2a,\]
and is thus satisfied if 
\begin{equation}nka^{k(k-1)}\ge k^2\lr\frac{ne}{k-1}\rr^{2k-2}a.\end{equation}
Setting $n=\frac{ka^{k/2}}{e}(1+o^*(1))$ where $o^*(1)=\frac{B\log k+C\log\log a+o(1)}{k}$, as it will end up being by the end of the proof of the theorem, we see that (6) holds if
\[a^{k^2-k/2}\ge2e^3a^{k^2-k+1}(1+o^*(1))^{2k-2},\]
i.e., if
\begin{equation}a^{k/2-1}\ge2e^3\lr1+\frac{B\log k+C\log\log a+o(1)}{k}\rr^{2k-2}.\end{equation}
Since \[\lr1+\frac{B\log k+C\log\log a+o(1)}{k}\rr^{2k-2}\le \exp\{2B\log k+2C\log\log a+2o(1)\}(1+o^{**}(1)),\] we see that (7) holds if \[a^{k/2-1}\ge 3e^3(\log a)^{2C}k^{2B},\]
i.e., if $k$ is large enough.
\end{proof}
The maximum of $\ph$ occurs either on the boundary of its domain or around its discretized critical points, i.e. points $(r,c)$ closest to satisfying $\ph(r+1,c)=\ph(r,c)$ and $\ph(r,c+1)=\ph(r,c)$. Now $\ph(r+1,c)=\ph(r,c)$ if
\[\frac{(k-r)^2a^c}{(r+1)(n-k+r+1)}=1,\]
while $\ph(r,c+1)=\ph(r,c)$ if
\[\frac{(k-c)^2a^r}{(c+1)(n-k+c+1)}=1,\]
Thus $(r,c)$ is a critical point only if
\[\frac{(k-r)^2}{(r+1)(n-k+r+1)}\frac{1}{a^r}=\frac{(k-c)^2}{(c+1)(n-k+c+1)}\frac{1}{a^c}.\]
But both $(k-x)^2/(x+1)(n-k+x+1)$ and $1/a^x$ are non-negative decreasing functions and thus 
\[\frac{(k-r)^2}{(r+1)(n-k+r+1)}\frac{1}{a^r}=\frac{(k-c)^2}{(c+1)(n-k+c+1)}\frac{1}{a^c}\Rightarrow r=c.\]  This shows that the maximum of $\ph$ will occur either on the boundary of its domain or around $(r,r)$ for some $r$.
\begin{lem} $\ph(r,r)$ is first decreasing and then increasing as a function of $r$.
\end{lem}
\begin{proof} We show that $\sqrt{\ph(r,r)}=\gamma(r)$ satisfies the required condition.  Notice that $\gamma$ is increasing when
\[\frac{\gamma(r+1)}{\gamma(r)}=\frac{a^{r+(1/2)}(k-r)^2}{(r+1)(n-k+r+1)}\ge 1,\]
or if
\begin{equation}\rho(r):=2\log(k-r)-\log(r+1)-\log(n-k+r+1)\ge-\lr r+\frac{1}{2}\rr\log a.\end{equation}
Now both the left and right sides of (8) are decreasing functions of $r$, but more is true: The right side of (8) decreases linearly, while the left side, $\rho$, has negative third derivative, and hence its concavity decreases.  Thus $\rho$ is either convex, concave, or first convex and then concave.  Now for large $k$, (8) does not hold for $r=1$ since the condition $n\le ak^2$ is not true, but (8) does hold for $r=k-1$ since we may assume that $n\le a^k/k$.  Now regardless of which of the three convexity scenarios
is actually valid for $\rho(r)$, it follows that $\gamma$ is decreasing for
 $r\le r_0$ and increasing thereafter, as asserted.\end{proof}

Now, to show that $\ph(k-1,k)$ is larger than all the values of $\ph$ 
 in the interior of $D$, we need to compare $\ph(k-1,k)$ to the critical points of $\ph$, and thus, by Lemmas 3.2 to 3.6, only 
to the values of 
$\ph(1,1)$ and $\ph(k-1,k-1)$.  We have already seen that $\ph(k-1,k)\ge\ph(1,1)$, and it is straightforward that $\ph(k-1,k)\ge\ph(k-1,k-1)$ if $n\le a^{k-1}/k$.  The above lemmas thus show that $\ph(k-1,k)=\max\{\ph(r,c):r,c=1,2,\ldots,k;r+c<2k\}$.  We may thus bound $\Delta$ as follows:
\begin{eqnarray}
\Delta&\le&\frac{{\nk}^2}{a^{2k^2}}k^2\max\{\varphi(r,c):1\le r,c\le k; r+c<2k\}\nonumber\\
&=&\frac{{\nk}^2}{a^{2k^2}}k^2\ph(k-1,k)\nonumber\\
&=&\frac{{\nk}^2}{a^{2k^2}}nk^3a^{k(k-1)}\nonumber\\
&=&\frac{nk^3}{a^k}\mu.
\end{eqnarray}
Turning our attention to $\delta$, we see that
\begin{eqnarray}\delta&=&\max_i\sum_{j\sim i}\p(I_j=1)\nonumber\\
&\le&k^2{{n}\choose{k-1}}^2\frac{1}{a^{k^2}}\nonumber\\
&=&\frac{k^4}{(n-k+1)^2}\mu\nonumber\\
&\le&\frac{2k^4}{n^2}\mu.
\end{eqnarray}
Substituting (9) and (10) into (*), we see that for any matrix $M$
\[\p(M\ {\rm is\ missing})\le\exp\lr-\mu+\mu\frac{nk^3}{a^k}e^{4k^4\mu/n^2}\rr,\]
so that by (3), 
\begin{equation}\p({\rm array\ is\ not\ an\ omnimosaic})\le a^{k^2}\exp\lr-\mu+\mu\frac{nk^3}{a^k}e^{4k^4\mu/n^2}\rr.\end{equation}
Assuming that 
\[\frac{ka^{k/2}}{e}\le n\le \frac{2ka^{k/2}}{e},\]
we see that (11) yields
\begin{equation}\p({\rm array\ is\ not\ an\ omnimosaic})\le a^{k^2}\exp\lr-\mu+\mu\frac{k^4}{a^{k/2}}e^{36k^2\mu/a^k}\rr.\end{equation}
Now if $n$ is further restricted so that 
\[n\le \frac{ka^{k/2}}{e}\lr1+\frac{2\log k}{k}\rr,\]
we see that
\[\mu\le\lr\frac{ne}{k}\rr^{2k}\frac{1}{a^{k^2}}\le k^4,\]
so that (12) yields
\begin{eqnarray}\p({\rm array\ is\ not\ an\ omnimosaic})&\le& a^{k^2}\exp\lr-\mu+\frac{k^8}{a^{k/2}}e^{36k^6/a^k}\rr\nonumber\\&=&a^{k^2}\exp\lr-\mu+o^{**}(1)\rr\end{eqnarray}
We next estimate as follows, using the first order upper bound in Stirling's formula, as, e.g., in \cite{feller}:
\begin{eqnarray}e^{-\mu}&=&\exp\lc -{n\choose k}^2\frac{1}{a^{k^2}}\rc\nonumber\\
&\le&\exp\lc-\frac{(n-k)^{2k}}{k!^2}\frac{1}{a^{k^2}}\rc\nonumber\\
&\le&\exp\lc-\frac{(n-k)^{2k}e^{2k}}{(2\pi k) k^{2k}}\frac{1}{\lr1+\frac{1}{12k}\rr^2}\frac{1}{a^{k^2}}\rc.
\end{eqnarray}
We now set 
\begin{equation}n-k=\frac{ka^{k/2}}{e}(2\pi k)^{1/2k}\lr1+\frac{1}{12k}\rr^{1/k}\exp\lc\frac{\log k}{k}+\frac{\log\log a}{2k}+\frac{o(1)}{k}\rc\end{equation} in (13) and (14) to get
\begin{eqnarray}\p({\rm array\ is\ not\ an\ omnimosaic})&\le& a^{k^2}\exp\lr-e^{2\log k+\log\log a+2o(1)}\rr e^{o^{**}(1)}\nonumber\\
&=&a^{k^2}\frac{1}{a^{k^2(1+{\tilde {o}}(1)})}e^{o^{**}(1)}<1\nonumber\\
\end{eqnarray} for suitably chosen $o(1)$ and ${\tilde{o}}(1)$ functions that do not go to zero too rapidly.  Notice that the value of $n$ given by (15) is no larger than that announced in the statement of the theorem.\end{proof}
\section{Open Questions}
The first of questions mentioned below is already under investigation:  We need  

(i) to work out the details of results analogous to Theorem 3.1 for graphs, weighted graphs, hypergraphs, non-rectangular arrays, and omnimosaics in higher dimensions; 

(ii) to improve the upper bound in Theorem 3.1.

(iii) to understand why there is an intrinsic difference between one and two-dimensional omni behavior (recall that in the former case there was a ``gap" between omni thresholds and the blow-up threshold for the expected number of missing $k$-subsequences);

(iv) to produce a combinatorial argument or bijection that leads to a proof of our conjecture that $\p(M\ {\rm is\ missing})\le B_k\p(J\ {\rm is\ missing})$.  Our conjecture is based on robust numerical evidence and several ``near-proofs," and it is important to note that while we believe that $B_k=1$, our agenda would be realized even if $B_k$ were a huge constant, or a polynomial, or even as rapidly growing as $a^{\sqrt k}$; and 

(v) to understand the role of monotonicity and extend our agenda to excluded permutation matrices, as studied, e.g., by \cite{marcustardos}.
 
\section{Acknowledgements} The research of all three authors was supported by NSF Grant 1004624, and conducted by KRB and NGT as part of their REU Project.

\end{document}